\newcommand\be{\begin{equation}} 
\newcommand\ee{\end{equation}}
\newcommand\bea{\begin{eqnarray}} 
\newcommand\eea{\end{eqnarray}} 
\newcommand\bi{\begin{itemize}}
\newcommand\ei{\end{itemize}} 
\newcommand\ben{\begin{enumerate}} 
\newcommand\een{\end{enumerate}}
\newcommand\bc{\begin{center}} 
\newcommand\ec{\end{center}} 
\newcommand\ba{\begin{array}} 
\newcommand\ea{\end{array}}
\newcommand{\R}{\ensuremath{\mathbb{R}}} 
\newcommand{\Z}{\ensuremath{\mathbb{Z}}} 
\newcommand{\Q}{\mathbb{Q}} 
\newcommand{\N}{\mathbb{N}}
\newcommand{\bm}[1]{\mbox{\boldmath $ {#1} $}}
\newtheorem{thm}{Theorem}[section]
\newtheorem{lem}[thm]{Lemma}
\theoremstyle{definition} 
\begin{document}

\title[On $m$-covering families of Beatty sequences with irrational moduli] 
{On $m$-covering families of Beatty sequences with irrational moduli}


\author{Peter Hegarty} \address{Department of Mathematical Sciences, 
Chalmers University Of Technology and University of Gothenburg,
41296 Gothenburg, Sweden} \email{hegarty@chalmers.se}


\subjclass[2000]{00Z00, 00Z00 (primary), 00Z00 (secondary).} \keywords{Beatty 
sequence, Weyl criterion}

\date{\today}

\begin{abstract} We generalise Uspensky's theorem 
characterising eventual exact (e.e.) 
covers of the positive integers by homogeneous 
Beatty sequences, to e.e. 
$m$-covers, for any $m \in \N$, by homogeneous
sequences with irrational moduli. We also consider inhomogeneous sequences,
again with irrational moduli, 
and obtain a purely arithmetical characterisation of e.e. $m$-covers. This
generalises a result of Graham for $m = 1$, but when $m > 1$
the arithmetical description is more complicated. 
Finally we speculate on how one might make sense of the notion of an
exact $m$-cover when $m$ is not an integer, and 
present a $\lq$fractional version' of Beatty's theorem. 
\end{abstract}


\maketitle

\setcounter{equation}{0}

\setcounter{equation}{0}

\section{Introduction and statement of results}

Throughout this paper, the integer and fractional parts of a real number $x$ 
will be denoted by $\lfloor x \rfloor$ and $\{x\}$ respectively. Hence 
\be
\{x\} = x - \lfloor x \rfloor.
\ee
We trust that no confusion will arise from using the same notation for sets as 
for fractional parts of numbers.
\\
\par
Next, we define the terms in the title of the 
article.
\\
\\
{\bf Definition 1.1.}
Let $\alpha, \beta \in \R$ with $\alpha > 0$. Denote
\be
S(\alpha,\beta) := \{ \lfloor n\alpha + \beta \rfloor : n \in \N \}.
\ee
We wish to think of $S(\alpha,\beta)$ as a multiset of integers : 
in other words, if some 
integer appears more than once (which will be the case whenever $\alpha < 1$), 
then we take account of the number of times it appears. 
The multiset $S(\alpha,\beta)$ is called a {\em Beatty sequence}. The number 
$\alpha$ is called the {\em modulus} of the sequence. If $\beta = 0$ we say that
the Beatty sequence is {\em homogeneous}, otherwise {\em inhomogeneous}. 
Note that, if $\alpha \in \N$, then $S(\alpha,\beta)$ is an arithmetic
progression (AP). 
\\
\\
{\bf Definition 1.2.}
Let $m$ be a positive integer, $I$ a finite index set and $\{S_i : i \in I\}$ 
a family of multisets of integers. The 
family is said to be an {\em $m$-cover} 
if every integer appears at least $m$ times in the union of the $S_i$, counting
multiplicities. If every integer appears exactly $m$ times, we say that the 
$m$-cover is {\em exact}. A little more generally, if every sufficiently large 
positive integer appears at least (resp. exactly) $m$ times, we speak of an
{\em eventual (exact) $m$-cover}. Eventual exact $m$-covers are the primary 
objects of study in this paper, and we shall henceforth use the acronym 
$m$-EEC for these. In addition, we shall always drop the prefix when $m=1$. 
\\
\\
{\bf Remark 1.3.} 
It is not hard to see that an eventual (exact) $m$-covering family of
APs is in fact an (exact) $m$-cover. However, the same
need not be true of more general Beatty sequences.
\\
\\
{\bf Definition 1.4.} Let $m > 1$ and $\{S_i : i \in I\}$ be an $m$-EEC. 
We say that this covering family is {\em reducible} if there exist
positive integers $m_1,m_2$ satisfying $m_1 + m_2 = m$ 
and a partition $I = J \sqcup K$, such that
$\{S_i : i \in J\}$ is an $m_1$-EEC and $\{S_i : i \in K\}$ is an
$m_2$-EEC. Otherwise, the cover is called {\em irreducible}. 
\\
\par  
The basic problem of interest is to characterise all $m$-EEC's 
consisting of Beatty sequences. The main new results of this paper provide
such characterisations for all $m \in \N$, when the moduli of
the sequences are all irrational. 
\par We begin with a brief survey of the
existing literature. Henceforth, it is to be understood that 
$\lq$cover' always refers to a covering family of Beatty sequences. 
It is clear that a necessary condition for
the family $\{S(\alpha_i,\beta_i) : i = 1,...,k\}$ to be 
an $m$-EEC is that
\be\label{density}
\sum_{i=1}^{k} \frac{1}{\alpha_i} = m.
\ee
There is a considerable literature on the case
$m=1$ - 
for a recent overview and a much more exhaustive list of references than
those given here, see 
Section 10 of \cite{F09}. 
In the case of homogeneous sequences, there is a classical
result :
\\
\\
{\bf Theorem 1.5.} 
{\em Let $\alpha_1,...,\alpha_k$ be positive real numbers. Then 
$\{S(\alpha_1,0),...,S(\alpha_k,0)\}$ is an 
EEC if and only if (\ref{density}) holds and either
\par (i) $k=1$ and $\alpha_1 = 1$, or
\par (ii) $k=2$ and $\alpha_1 \not\in \Q$.}
\\
\par The sufficiency of condition (i) is trivial, that of (ii) is known as
Beatty's theorem, though it was first discovered by Lord 
Rayleigh{\footnote{Condition (ii) guarantees that {\em every} positive integer 
occurs exactly once in the multiset 
$S(\alpha_1,0) \cup S(\alpha_2,0)$, and Beatty's 
theorem is usually stated in this form.}}. That 
$k \leq 2$ is necessary was first proven by Uspensky \cite{U}, using
Kronecker's approximation theorem. A more elementary proof
was later provided by Graham \cite{Gr63}. 

When one allows inhomogeneous sequences, there is no such simple 
classification. However, a certain amount is known. In the case of two
sequences with irrational moduli, there is the following generalisation of
Beatty's theorem :
\\
\\
{\bf Theorem 1.6. (Skolem \cite{S}, Fraenkel \cite{F69})}
{\em Let $\alpha_1,\beta_1,\alpha_2,\beta_2$ be real numbers, with 
$\alpha_1, \alpha_2$ positive, irrational and satisfying (\ref{density}). Then 
$\{S(\alpha_1,\beta_1), S(\alpha_2,\beta_2)\}$ is an 
EEC if and only if 
\be\label{inhompair}
\frac{\beta_1}{\alpha_1} + \frac{\beta_2}{\alpha_2} \in \Z.
\ee}
 
Let $\{S(\alpha_1,\beta_1), S(\alpha_2,\beta_2)\}$ be an EEC and suppose
$\{S(a_i,\phi_i) : i = 1,...,\mu\}$ and $\{S(c_{j},\psi_{j}) : 
j = 1,...,\nu\}$ are exact covering families 
of arithmetic progressions. Then, clearly, 
\be\label{graham}
\left\{ \bigcup_{i=1}^{\mu} S(\alpha_1 a_i, \alpha_1 \phi_i + \beta_1) \right\} 
\cup \left\{ \bigcup_{j=1}^{\nu} S(\alpha_2 c_{j}, 
\alpha_2 \psi_{j} + \beta_2) \right\}
\ee
is also an EEC. Graham \cite{Gr73} proved that any EEC in which at least one
of the moduli is irrational must have the form (\ref{graham}). In 
particular, this implies that the moduli in an EEC are either all rational or 
all irrational. It also reduces the classification of EEC's with 
irrational moduli to that of EEC's with integer moduli, that is, 
of exact covering families of APs. 
The latter problem has a long history but remains inadequately resolved. 
For an introduction to known results and open problems
concerning covers and exact covers by APs, see Problems 
F13-14 in \cite{Gu}.   
One noteworthy fact is that the moduli in a covering family of
APs cannot all
be distinct. A beautiful proof of this, using generating functions, can be 
found in \cite{E}. Graham's 1973 result implies that the same is true
of EEC's with irrational moduli. An important open problem
in this field concerns EEC's with distinct rational moduli. 
Fraenkel \cite{F73} conjectured the \\
following :
\\
\\
{\bf Fraenkel's Tiling Conjecture.} {\em Let $0 < \alpha_1 < \cdots < 
\alpha_k$ and let $\beta_1,...,\beta_k$ be any real numbers. Then the 
family $\{S(\alpha_i,\beta_i) : i = 1,...,k\}$ is an EEC if and only
if $k \geq 3$ and 
\be
\alpha_i = \frac{2^k - 1}{2^{k-i}}, \;\;\; i = 1,...,k.
\ee}

So let us turn to $m > 1$. Now one is interested in characterising 
irreducible $m$-EECs. In the case of APs, the 
existence of irreducible exact $m$-covers, for every $m > 1$, was first 
demonstrated by Zhang Ming-Zhi \cite{Z}. Graham and O'Bryant
\cite{GrOB} studied $m$-EEC's with rational moduli, and proposed a 
generalisation of Fraenkel's Tiling Conjecture. The remainder of this paper
is concerned with irrational moduli. The only result we could find in the 
literature is the following generalisation of Beatty's theorem :
\\
\\
{\bf Theorem 1.7.}
{\em Let $m \in \N$ and $\alpha_1, \alpha_2$ be positive irrational numbers 
satisfying $1/\alpha_1 + 1/\alpha_2 = m$. Then every positive integer
appears exactly $m$ times in the multiset union $S(\alpha_1,0) \cup
S(\alpha_2,0)$.}
\\
\par This result seems to first appear in \cite{OB}. The proof given there is
not difficult, but employs generating functions. A completely 
elementary proof was given by Larsson \cite{L}, whose motivation
for studying $m$-covers came from combinatorial games. Note that
Theorem 1.7 implies that irreducible, homogeneous 
$m$-EEC's with irrational moduli do exist
for every $m > 1$. It turns out, however, that Theorem 1.7 describes all
of them. The first main result of this paper is the following :
\\
\\
{\bf Theorem 1.8.}
{\em Let $m \in \N$. Let $\alpha_1,...,\alpha_k$ be positive irrational numbers 
satisfying (\ref{density}). Then $\{S(\alpha_1,0),...,S(\alpha_k,0)\}$ is an 
$m$-EEC if and only if $k$ is even, $k = 2l$ say, and the $\alpha_i$ can be 
re-ordered so that
\be
\frac{1}{\alpha_{2i-1}} + \frac{1}{\alpha_{2i}} \in \Z, \;\;\; i = 1,...,l.
\ee}

From this we shall deduce
the following generalisation of Theorem 1.5 :
\\
\\
{\bf Theorem 1.9.} 
{\em 
Let $m \in \N$ and $\alpha_1,...,\alpha_k$ be positive real numbers, not all 
rational. Then $\{S(\alpha_1,0),...,S(\alpha_k,0)\}$ is an irreducible 
$m$-EEC if and only if $k=2$ and (\ref{density}) holds.}
\\
\par Turning to the inhomogeneous case, Theorem 1.6 generalises verbatim to 
$m > 1$. Since we could not find this fact stated explicitly anywhere in the 
literature, and our proof of it follows a different approach from that
in \cite{F69}, we state it as a separate result :
\\
\\
{\bf Theorem 1.10.}
{\em Let $m \in \N$. Let $\alpha_1, \beta_1, \alpha_2, \beta_2$ be real 
numbers, 
with $\alpha_1, \alpha_2$ positive, irrational and satisfying (\ref{density}). 
Then $\{S(\alpha_1,\beta_1), S(\alpha_2,\beta_2)\}$ is an $m$-EEC if and only
if (\ref{inhompair}) holds.}
\\
\par
Given the previous three results, it is natural to ask if  
Graham's 1973 result generalises as follows :
\\
\\
{\bf Question 1.11.} {\em Let $m \in \N$. Is it true that every $m$-EEC
with irrational moduli has the form
\be\label{grahamstruct}
\bigcup_{k=1}^{t} \left\{ \left\{ \bigcup_{i=1}^{\mu_k} S(\alpha_{2k-1}a_{i,k}, 
\alpha_{2k-1}\phi_{i,k} + \beta_{2k-1}) \right\} \cup 
\left\{ \bigcup_{j=1}^{\nu_{k}} S(\alpha_{2k}c_{j,k},
\alpha_{2k}\psi_{j,k} + \beta_{2k}) \right\} \right\}
\ee
where there are positive integers $m_1,...,m_t, d_1,...,d_t$ satisfying
$m_1d_1 + \cdots + m_td_t = m$, and, for each $k = 1,...,t$, one has 
\par (i) \be
\frac{1}{\alpha_{2k-1}} + \frac{1}{\alpha_{2k}} = m_k,
\ee
\par (ii) 
$\{S(a_{i,k},\phi_{i,k}) : i = 1,...,\mu_k\}$ and
$\{S(c_{j,k},\psi_{j,k}) : j=1,...,\nu_{k}\}$ are 
exact $d_k$-covering families of APs
\par (iii) \be
\frac{\beta_{2k-1}}{\alpha_{2k-1}} + 
\frac{\beta_{2k}}{\alpha_{2k}} \in \Z
\; ?
\ee}

The second main result of our paper is a negative answer to this question. 
We shall give explicit counterexamples and provide a description, in terms of 
APs, of the most 
general possible form 
of an $m$-EEC with irrational moduli (see Section 4 below). 
While this provides a $\lq$purely 
arithmetical/combinatorial' 
characterisation of such $m$-EECs, generalising \cite{Gr73}, we 
do not find our result satisfactory and feel that a simpler and more explicit 
description may be possible. This point will be discussed again later on.  
\par The rest of the paper is organised as follows. In Section 2, we give 
prerequisite notation, terminology and background results. As well as 
extending known theorems, a secondary purpose of our paper is to provide a 
uniform treatment of this material, something which we have found 
lacking in the existing literature. Our approach is based on Weyl's 
equidistribution theorem, and is thus most similar in spirit to that
followed by Uspensky \cite{U}. However, he only 
employed a weaker equidistribution result (Kronecker's theorem), and we 
also make more explicit the formula for the representation function $r(N)$, 
which counts the number of occurrences of the integer $N$ in a covering
family, in terms of 
sums of fractional parts (see eq. (\ref{rep}) below). 
Already in Section 2, we will prove Theorem 1.10 - 
this proof is extremely simple and provides the reader with a quick glimpse of 
our method. Section 3 deals with homogeneous Beatty sequences and the proof of 
Theorems 1.8 and 1.9. This section is the heart of the paper.  
In Section 4, we turn to the inhomogeneous case and the issue of 
how to properly generalise \cite{Gr73}. In Section 5, we briefly 
broach the question of
how one might make sense of the notion of $m$-cover, when $m$ is not an 
integer. What we
will actually prove is a fractional version of Beatty's theorem. This 
follows a suggestion of Fraenkel, who was also interested in possible
connections to combinatorial games. Further development of this line of 
investigation is left for future work, the possibilities for which we 
summarise in Section 6. 
  
\setcounter{equation}{0}

\section{Preliminaries and proof of Theorem 1.10}

$\;$ \par
Our approach is based on  
standard results concerning equidistribution of sequences. We have
chosen the following formulation as it seems the most natural one, even if we 
could get away with something less (see Remark 2.2 below) : 

\begin{lem}
Let $\tau_1 = \frac{p_1}{q_1},...,\tau_k = \frac{p_k}{q_k}$ be  
rational numbers written in lowest terms, whose denominators 
are co-prime, i.e.: GCD$(p_i,q_i)$ \\ $= 1$ for $i = 1,..,k$ and 
GCD$(q_i,q_j) = 1$ for $i \neq j$. Let $\theta_1,...,\theta_l$ be 
irrational numbers which are affine independent over $\mathbb{Q}$, 
i.e.: the equation
\be
c_0 + c_1 \theta_1 + \cdots + c_l \theta_l = 0, \;\;\;\; c_0,c_1,...,c_l \in 
\mathbb{Q},
\ee
has only the trivial solution $c_0 = c_1 = \cdots = c_l = 0$. 
\par For each $i = 1,...,k$, let $\mu_i$ be the measure on $[0,1)$ which 
gives measure $1/q_i$ to each point mass $u/q_i$, $u = 0,1,...,q_i - 1$. 
Let $\mu_0$ be Lebesgue measure and let $\mu$ be the measure on 
$[0,1)^{k+l}$ given by the product 
\be
\mu = \mu_1 \times \cdots \times \mu_k \times \mu_{0}^{l}. 
\ee
Then, as $n$ ranges over the natural numbers, the $(k+l)$-tuple
\be
\left( \{n \tau_1 \},...,\{n \tau_k\},\{n \theta_1\},...,\{n \theta_l\} \right)
\ee
is equidstributed on $[0,1)^{k+l}$ with respect to $\mu$. 
\end{lem}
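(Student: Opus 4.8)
The plan is to run Weyl's criterion in its Fourier form. Viewing $[0,1)^{k+l}$ as the torus $\mathbb{T}^{k+l}$, the characters $\chi_{\mathbf h}(x) = e^{2\pi i\langle \mathbf h, x\rangle}$, $\mathbf h \in \Z^{k+l}$, are continuous, and their linear span (the trigonometric polynomials) is dense in $C(\mathbb{T}^{k+l})$ by Stone--Weierstrass. Since $\mu$ is a Borel probability measure, weak convergence of the empirical measures $\frac1N\sum_{n=1}^{N}\delta_{x_n}$ (where $x_n$ is the $(k+l)$-tuple in the statement) to $\mu$ is therefore \emph{equivalent} to convergence of all Fourier coefficients. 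So it suffices to show, for every $\mathbf h = (h_1,\dots,h_k,g_1,\dots,g_l) \in \Z^{k+l}$,
\[
\frac1N \sum_{n=1}^{N} e^{2\pi i n\gamma(\mathbf h)} \;\longrightarrow\; \int_{[0,1)^{k+l}} \chi_{\mathbf h}\, d\mu \qquad (N \to \infty),
\]
where $\gamma(\mathbf h) := \sum_{i=1}^{k} h_i\tau_i + \sum_{j=1}^{l} g_j\theta_j$, using that $\langle \mathbf h, x_n\rangle \equiv n\gamma(\mathbf h) \pmod 1$.

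I would then evaluate both sides separately. The integral factors over the product structure of $\mu$: each Lebesgue factor gives $\int_0^1 e^{2\pi i g_j y}\,dy$, which is $1$ if $g_j = 0$ and $0$ otherwise; each factor $\mu_i$ gives $\tfrac1{q_i}\sum_{u=0}^{q_i-1} e^{2\pi i h_i u/q_i}$, a geometric sum of $q_i$-th roots of unity equal to $1$ if $q_i \mid h_i$ and $0$ otherwise. Hence $\int \chi_{\mathbf h}\,d\mu$ equals $1$ when $q_i \mid h_i$ for all $i$ \emph{and} $g_j = 0$ for all $j$, and equals $0$ in every other case. On the left, $\tfrac1N\sum_{n=1}^N e^{2\pi i n\gamma(\mathbf h)}$ tends to $1$ if $\gamma(\mathbf h) \in \Z$ and to $0$ otherwise (bound the geometric series when $\gamma(\mathbf h)\notin\Z$). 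So the whole lemma reduces to the arithmetic claim: $\gamma(\mathbf h) \in \Z$ if and only if $q_i \mid h_i$ for all $i$ and $g_j = 0$ for all $j$.

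The "if" direction is immediate. For "only if", suppose $\gamma(\mathbf h)\in\Z$. Then $\sum_{j=1}^l g_j\theta_j = \gamma(\mathbf h) - \sum_{i=1}^k h_ip_i/q_i$ is rational, so the affine independence hypothesis on the $\theta_j$ forces $g_j = 0$ for all $j$ (and the rational value to be $0$). It remains to deduce $q_i \mid h_i$ from $\sum_i h_ip_i/q_i \in \Z$. Here I would clear denominators: with $Q = q_1\cdots q_k$ we get $\sum_i h_ip_i(Q/q_i) \in Q\Z$; fixing $i_0$ and reducing modulo $q_{i_0}$ annihilates every term with $i\neq i_0$ since $q_{i_0}\mid Q/q_i$, leaving $h_{i_0}p_{i_0}(Q/q_{i_0}) \equiv 0 \pmod{q_{i_0}}$. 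As $\gcd(p_{i_0},q_{i_0}) = 1$ and $\gcd(Q/q_{i_0},q_{i_0}) = 1$ by pairwise coprimality of the $q_i$, this yields $q_{i_0}\mid h_{i_0}$. Matching the two evaluations for every $\mathbf h$ then finishes the proof via Weyl's criterion.

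There is no deep difficulty here; the step deserving the most care is the reduction itself, namely being explicit that convergence of the empirical Weyl sums against the \emph{full} character group of $\mathbb{T}^{k+l}$ really does force weak convergence to the specific atomic-plus-continuous measure $\mu$ (and not merely to some measure sharing its Fourier coefficients). This is legitimate because characters separate Borel measures on a compact abelian group, but it is worth stating carefully since Weyl's criterion is most often quoted only for Lebesgue measure. The remaining ingredients --- the geometric-sum evaluations and the Chinese-Remainder-type divisibility argument --- are entirely routine.
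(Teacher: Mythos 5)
Your proof is correct, and it is organized differently from the paper's. The paper disposes of the lemma in two lines by a reduction: the rational coordinates are handled by the Chinese Remainder Theorem, after which the claim becomes the statement that $(\{n\theta_1\},\dots,\{n\theta_l\})$ is Lebesgue-equidistributed as $n$ runs through any fixed infinite arithmetic progression, which is quoted from the multi-dimensional Weyl criterion. You instead prove the whole statement in one pass on the torus $\mathbb{T}^{k+l}$: you compute the Fourier coefficients of the product measure $\mu$ explicitly (the root-of-unity sums giving the divisibility conditions $q_i \mid h_i$, the Lebesgue factors giving $g_j=0$), evaluate the limits of the Weyl sums, and match the two via the arithmetic equivalence $\gamma(\mathbf h)\in\Z \iff (q_i\mid h_i \text{ for all } i \text{ and } g_j=0 \text{ for all } j)$, whose nontrivial direction you correctly derive from affine independence plus the pairwise coprimality of the $q_i$. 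What your route buys is a self-contained proof of the precise product-measure statement, including the point you rightly flag --- that Weyl's criterion extends to measures other than Lebesgue because characters separate Borel measures on a compact abelian group; what the paper's route buys is brevity, at the cost of leaving the assembly of the CRT structure with the AP-restricted equidistribution implicit. (The only caveat worth a half-sentence in your write-up: since $\mu$ has atoms at the points $u/q_i$, weak-$*$ convergence gives counting asymptotics only for $\mu$-continuity sets, but this is the standard meaning of equidistribution with respect to a measure and is all that the paper ever uses, e.g.\ in Remark 2.2.)
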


\begin{proof}
When all the moduli are rational $(l = 0)$, this is just the Chinese 
Remainder Theorem. For general $l > 0$, the lemma thus 
asserts that the $l$-tuple
\be\label{ltuple}
\left( \{n \theta_1\},...,\{n \theta_l\} \right)
\ee
is equidistributed on $[0,1)^{l}$, when $n$ runs through any infinite
arithmetic progression. This fact can be immediately deduced from the
multi-dimensional Weyl criterion - see \cite{KN}, for example. 
\end{proof}
$\;$ \\
{\bf Remark 2.2.} As previously noted, we will not be needing the full force of
the lemma. What we will use is the consequence that, for any subintervals
$I_1,...,I_l$ of $[0,1)$ and any arithmetic progression $S(a,b)$, there
are arbitrarily large $n \in S(a,b)$ for which the $l$-tuple (\ref{ltuple}) 
lies in $I_1 \times \cdots \times I_l$.
\\
\par Fix $m,k \in \N$. Let real numbers 
$\alpha_i, \beta_i$, $i = 1,...,k$, be given with the 
$\alpha_i$ positive, irrational and satisfying (\ref{density}). 
To simplify notation, 
put 
\be\label{deftheta}
\theta_i := \frac{1}{\alpha_i}, \;\;\; \gamma_i := - \frac{\beta_i}{\alpha_i}, 
\;\;\; i = 1,...,k.
\ee
Hence, 
\be\label{thetadensity}
\sum_{i=1}^{k} \theta_i = m.
\ee
For $N \in \N$ and $i \in \{1,...,k\}$, set
\be
r_{i}(N) := \# \{n \in \N : \lfloor n\alpha_i + \beta \rfloor = N \}.
\ee
Setting
\be\label{repfn}
r(N) := \sum_{i=1}^{k} r_{i} (N),
\ee
we note that the family $\{S(\alpha_i,\beta_i) : i = 1,...,k\}$ is an $m$-EEC if
and only if $r(N) = m$ for all $N \gg 0$. The function $r(\cdot)$ will be 
called the {\em representation function} associated to the family 
$\{S(\alpha_i,\beta_i) : i = 1,...,k\}$. 
\par For each $i$, since $\alpha_i$ is irrational, there is at most 
one integer $n_i$ such that $n_i \alpha_i + \beta_i \in \Z$. Hence
$n\alpha_i + \beta_i \not\in \Z$ for all $n \gg 0$ and all $i$. It follows
that, for $N \gg 0$, 
\be
r_{i}(N) = \# \{n \in \N : N < n\alpha_i + \beta_i < N+1 \},
\ee
the point being that both inequalities are strict. One easily deduces that
\be\label{repdiff}
r_{i}(N) = \lfloor (N+1)\theta_i + \gamma_i \rfloor - \lfloor N\theta_i + 
\gamma_i \rfloor.
\ee
Define the function $\epsilon : \Z \rightarrow \R$ by 
\be\label{epsilon}
\epsilon (N) := \sum_{i=1}^{k} \{ N\theta_i + \gamma_i \}.
\ee
From (1.1), (\ref{thetadensity}) and (\ref{repdiff}) one easily deduces that
\be\label{rep}
r(N) = m + (\epsilon(N) - \epsilon(N+1)).
\ee
Hence, the Beatty sequences form an $m$-EEC if and only if the 
function $\epsilon(N)$ is constant for all $N \gg 0$. 
We can already quickly deduce Theorem 1.10. For if we
have only two sequences, then since $\theta_1 + \theta_2 \in \Z$ one has
\be
\epsilon (N) = \left\{ \begin{array}{lr} \{\gamma_1 + \gamma_2\}, & 
{\hbox{if $\{N\theta_1 + \gamma_1\} < \{\gamma_1 + \gamma_2\}$}}, \\
1 + \{\gamma_1 + \gamma_2\}, & {\hbox{otherwise}}. \end{array} \right.
\ee
It follows that, if $\gamma_1 + \gamma_2 \in \Z$, then 
$\epsilon(N) = 1$ for all $N \in \Z$, whereas if $\gamma_1 + \gamma_2 \not\in 
\Z$ then, since $\theta_1 \not\in \Q$,
a very weak form of Lemma 2.1 (already known to Dirichlet) implies
that $\{N\theta_1 + \gamma_1\} - \{\gamma_1 + \gamma_2\}$ will be both
positive and negative for arbitrarily large $N$.

\setcounter{equation}{0}

\section{The homogeneous case - proofs of Theorems 1.8 and 1.9.}

\par The proof of Theorem 1.8 will exhibit the main ideas of this paper, so
we will present it in detail, which will allow us to be more brief 
with all subsequent proofs. So let's now assume that all our 
sequences are homogeneous. Hence $\beta_i = \gamma_i = 0$ for $i = 1,...,k$ and 
\be
\epsilon(N) = \sum_{i=1}^{k} \{ N \theta_i\}.
\ee

Let $V$ be the vector space over $\Q$ spanned by $1,\theta_1,...,\theta_k$. 
Since the $\theta_i$ are irrational, we know
that dim$(V) > 1$. Let dim$(V) := d+1$ and, without loss of generality, assume 
that $1,\theta_1,...,\theta_d$ form a 
basis for $V$. Hence there exist rational numbers $q_{j,i}$, 
$0 \leq j \leq d$, $1 \leq i \leq k$ such that 
\be\label{q}
\theta_i = q_{0,i} + \sum_{j=1}^{d} q_{j,i} \theta_j, \;\;\; i = 1,...,k,
\ee
where
\be
1 \leq i \leq d \; \Rightarrow \; q_{j,i} = \left\{ \begin{array}{lr} 1, & 
{\hbox{if $j=i$}}, \\ 0, & {\hbox{if $j \neq i$}} \end{array} \right.
\ee
and
\be\label{sumq}  
\sum_{i=0}^{k} q_{j,i} = \left\{ \begin{array}{lr} m, & {\hbox{if $j = 0$}}, 
\\ 0, & {\hbox{if $j > 0$}}. \end{array} \right.
\ee
Set 
\be
Q_{j,i} := \left\{ \begin{array}{lr} \{ q_{j,i} \}, & {\hbox{if $j = 0$}}, 
\\ q_{j,i}, & {\hbox{if $j > 0$}}. \end{array} \right.
\ee
We may write each of the numbers $Q_{j,i}$ as a fraction in lowest terms, say 
\be
Q_{j,i} = \frac{u_{j,i}}{v_{j,i}}, \;\;\; u_{j,i} \in \Z, \;
v_{j,i} \in \N, \; {\hbox{GCD}}(u_{j,i},v_{j,i}) = 1.
\ee

We shall prove Theorem 1.8 by induction on $m$. The case $m=1$ follows from 
Theorem 1.5. If $m > 1$ then, in order to apply the induction hypothesis, it 
suffices, by Theorem 1.7, to find 
any pair $i_1,i_2 \in \{1,...,k\}$ such that 
$\theta_{i_1} + \theta_{i_2} \in \Z$. Hence this is all we need to do to finish 
the proof. Using Lemma 2.1, we shall deduce it as a consequence of the 
requirement that the function $\epsilon(N)$, given by (\ref{epsilon}), be 
constant for 
all $N \gg 0$. In a way which we will make rigorous 
in what follows, that lemma will allow us to ignore the influence of 
all but one of 
$\theta_1,...,\theta_d$ - for simplicity, we select $\theta_1$ (see eqs.
(\ref{equi1}) and (\ref{equi2})) - and then 
reduce the proof of the theorem to a purely
combinatorial problem (Proposition 3.3 below). 
\par To begin with, define positive integers $L_0, L$ by 
\be\label{l}
L_0 := {\hbox{LCM}} \{v_{0,i} : q_{1,i} \neq 0 \}, \;\;\;
L := {\hbox{LCM}} \{ |u_{1,i}| : q_{1,i} \neq 0\}.
\ee
For each $i$ such that $q_{1,i} \neq 0$, define the numbers 
$U_{i}, V_{i}$ by 
\be\label{uv}
Q_{0,i} =: \frac{U_{i}}{L_0}, \;\;\; q_{1,i} =: \frac{L}{V_i}.
\ee
Finally, we set
\be\label{homsys}
\left\{ \begin{array}{lr} 
a_i := L_0 V_i, \;\; b_i := -U_{i}V_i, & {\hbox{if $q_{1,i} > 0$}}, \\ 
c_{j} := - L_0 V_j, \;\; d_{j} := - U_{j} V_j, & {\hbox{if $q_{1,j} < 0$}}.
\end{array} \right.
\ee

We shall use Lemma 2.1 to establish the following claim :
\\
\\
{\bf Claim 3.2.} {\em If the function $\epsilon(N)$ is constant for all 
$N \gg 0$, then for every $t \in \Z$, we have an equality of multisets
\be\label{apunion}
\bigcup_{q_{1,i} > 0} S(a_i,t b_i) = \bigcup_{q_{1,j} < 0} 
S(c_{j},t d_{j}).
\ee}

Suppose the claim were false. Then clearly it must fail for some non-negative 
$t$. Choose such a $t$ 
and let $\eta_t$ be an element of the multiset difference. 
Without 
loss of generality, $\eta_t$ occurs more often on the left-hand side of 
(\ref{apunion}), 
say $r$ times on the left-hand side and $s$ times on the right-hand side, with 
$r > s$. Now let $\delta$ be a sufficiently small, positive real number - how 
small it should be will become clear below. By Lemma 2.1, we can find 
arbitrarily large integers $n$ satisfying
\be\label{equi1}
n \equiv 1 \; ({\hbox{mod $L_0 L$}}), \;\;\; \delta < \{ n \theta_1\} < 
\delta + e^{-1/\delta}, \;\;\; \{ n \theta_i\} < \delta^3, \; i = 2,...,d.
\ee
Let $n_0$ be any positive integer satisfying (\ref{equi1}).
Let $N_+$ (resp. $N_-$) be the least positive integer which is divisible by 
$n_0$, congruent to $t$ modulo $L_0 L$ and greater than 
$\frac{1}{\delta L_0 L} n_0 \eta_t$ (resp. $\frac{1-\delta}{\delta L_0 L} n_0 
\eta_t$). Then the point is that, provided $\delta$ is sufficiently 
small, for every $i = 1,...,k$ we have  
\be
\lfloor N_+ \{\theta_i \} + \{N_+ q_{0,i} \} \rfloor -
\lfloor N_- \{\theta_i \} + \{N_- q_{0,i} \} \rfloor = 
\left\{ \begin{array}{lr} 1, & 
{\hbox{if $q_{1,i} > 0$ and $\eta_t \in S(a_i,t b_i)$}}, \\
-1, & {\hbox{if $q_{1,i} < 0$ and $\eta_t \in S(c_{i}, t d_{i})$}}, 
\\ 0, & {\hbox{otherwise}}. \end{array} \right.
\ee
This in turn is easily seen to imply that 
\be
\epsilon(N_+) - \epsilon(N_-) = s - r \neq 0.
\ee
Since the numbers $N_+$ and $N_-$ can be made arbitrarily large, this would 
mean that the function $\epsilon(N)$ could not be constant for $N \gg 0$, a 
contradiction which establishes Claim 3.2. 
  
We state the next assertion as a separate proposition, as the reader may find 
it interesting in its own right. It is also the crucial combinatorial 
ingredient in this section :
\\
\\
{\bf Proposition 3.3.} {\em Let $a_1,...,a_{\mu}, c_{1},...,c_{\nu}$ be positive 
integers and $b_1,...,b_{\mu}, d_{1},...,d_{\nu}$ be any integers. If, for every 
$t \in \Z$, we have an equality of multisets 
\be\label{apunion2}
\bigcup_{i=1}^{\mu} S(a_i,t b_i) = \bigcup_{j=1}^{\nu} S(c_{j}, t d_{j}),
\ee
then $\mu = \nu$, and we can reorder so that, for each $i = 1,...,\mu$, 
$a_i = c_{i}$ and $b_{i} \equiv d_{i} \; ({\hbox{mod $a_i$}})$. }
\\
\par The proof of the proposition will employ the following facts :
\\
\\
{\bf Lemma 3.4.} {\em Let $p$ be a prime, $l$ a non-negative integer, 
$l_1,...,l_{\chi}$ integers each strictly greater than $l$ and 
$b,d_1,...,d_{\chi}$ any integers. Suppose that, as sets,
\be\label{union}
S(p^{l},b) \subseteq \bigcup_{j=1}^{\chi} S(p^{l_j},d_j).
\ee
Then, 
\par (i) $S(p^l,b)$ equals the disjoint union of 
some subset of the terms on the right-hand side of (\ref{union}),
\par (ii) Let $L$ be the maximum of the $l_j$. Then for some $\xi_1 \in 
\{0,1,...,p^L - 1\}$ there exists, for each $\xi_2 \in \{0,1,...,p-1\}$, some 
$j$ such that 
\be
l_j = L \;\;\;\; {\hbox{and}} \;\;\;\; d_j \equiv \xi_1 + \xi_2 p^{L-1} \; 
({\hbox{mod $p^L$}}).
\ee}

\begin{proof} {\em of Lemma 3.4.}
These are standard observations which can be proven in various ways. 
For example, one can consider the $p$-ary rooted tree $\mathcal{T}$, 
whose nodes 
are all the progressions $S(p^i,u)$, where $0 \leq i \leq L$ and $0 \leq u
< p^i$, and in which, for $i < L$, the node $S(p^i,u)$ has the $p$ daughters
$S(p^{i+1},u + vp^i)$, $v = 0,1,...p-1$. Eq. (\ref{union}) expresses the 
hypothesis that the rooted subtree $T_0$ under a certain node $x$ is, 
apart from the node $x$ itself, entirely contained inside
the union of a collection $T_1,...,T_{\chi}$ of rooted subtrees at strictly 
lower levels. Part (i) then 
asserts that some subset of the 
$T_1,...,T_{\chi}$ are pairwise disjoint and their
union equals $T_{0} \backslash \{x\}$. This is simple to prove, for example by
induction on the depth of $T_0$. Part (ii) is then also an immediate
consequence of the rooted tree structure. 
\end{proof}

\begin{proof} {\em of Proposition 3.3.}
We shall perform an induction on several different parameters. First of all, 
let $n$ be the total number of distinct primes which divide at least one of 
the moduli $a_i$ or $c_{j}$. If $n = 0$ then 
each individual AP is just $\Z$ and the proposition simply 
asserts the obvious fact that they must then be equal in number, i.e.: that 
$\mu = \nu$. So now suppose $n > 0$ and that the proposition is true for all 
smaller values of $n$. Let $p := p_1 < \cdots < p_n$ be the distinct primes 
which divide at least one modulus. Let $p^k$ denote the highest power of $p$ 
dividing any modulus and partition the moduli into subsets 
$M_0, M_{0}^{\prime},...,M_k,M_{k}^{\prime}$, where
\be
M_{l} := \{ i : p^{l} \mid \mid a_i \}, \; M_{l}^{\prime} := 
\{j : p^{l} \mid \mid c_{j}\}, \;\;\; l = 0,...,k.
\ee

By the Chinese Remainder Theorem, for each $i = 1,...,\mu$ (resp. each 
$j = 1,...,\nu$) we can write 
\be
S(a_i,t b_i) = S(p^{l_i},t b_i) \cap S(A_i,t b_i) \;\;\; 
({\hbox{resp.}} \; S(c_{j},t d_{j}) = S(p^{l_{j}^{\prime}},t d_{j}) \cap 
S(C_{j},t d_{j})),
\ee 
where $p^{l_i} \mid \mid a_i$ and $A_i = a_i/p^{l_i}$ 
(resp. $p^{l_{j}^{\prime}} \mid \mid c_{j}$ and $C_j = c_j/p^{l_{j}^{\prime}}$). 
Let $\xi \in \{0,1,...,p^k - 1\}$ and let $t$ be any integer s.t. 
$t \equiv 1 \; ({\hbox{mod $p^k$}})$. Considering the intersection of both 
sides of (\ref{apunion2}) with 
$S(p^k,\xi)$ we find that, as multisets,  
\be\label{oneprimeless}
\bigcup_{i : b_i \equiv \xi \; ({\hbox{mod $p^k$}})}
S(A_i,t b_i) = \bigcup_{j : d_j \equiv \xi ({\hbox{mod $p^k$}})} S(C_j,t d_j).
\ee

Now note that a necessary and sufficient condition for (\ref{apunion2}) 
to hold for every 
$t \in \Z$ is that it do so for any $t$ divisible only by those primes dividing 
some $a_i$ or $c_j$. Applying this observation to (\ref{oneprimeless}) 
instead, we deduce that the latter equality 
holds for every $t \in \Z$. Since there are 
exactly $n-1$ primes 
dividing some $A_i$ or $C_j$, we can apply the induction hypothesis to 
conclude that, for each $i$ such that $b_i \equiv \xi \; ({\hbox{mod $p^k$}})$,
there exists a $j$ such that $S(A_i,b_i) = S(C_j,d_j)$. For such a pair 
$(i,j)$ it follows that
\be\label{containment}
S(a_i,b_i) \supseteq S(c_j,d_j) \Leftrightarrow l_i \leq l_{j}^{\prime}.
\ee

Now we introduce the second induction parameter, which is the total number of 
APs involved in (\ref{apunion2}), i.e.: on the quantity 
$\mu + \nu$. 
It is clear that Proposition 3.3 holds if $\mu = \nu = 1$, so suppose 
$\mu + \nu > 2$ and that the proposition holds for any smaller value of 
$\mu + \nu$. If there were any pair $(i,j)$ whatsoever such that 
$S(a_i,b_i) = S(c_j,d_j)$, then we could immediately cancel this pair from 
(\ref{apunion2}) and apply the induction on $\mu + \nu$ to deduce the 
proposition. Hence, 
we may assume no such pair exists.
\par Let $l_{\min}$ (resp. $l_{\min}^{\prime}$) denote the smallest value of $l$ 
(resp. $l^{\prime}$) such that the set $M_l$ (resp. $M_{l}^{\prime}$) is 
non-empty. We claim that $l_{\min} = l_{\min}^{\prime}$. To see this, set 
$t := p^k$ in (\ref{apunion2}) and consider the contribution of both sides to numbers 
which are divisible by $p^{l}$ but not $p^{l+1}$, where $l = \min \{l_{\min}, 
l_{\min}^{\prime}\}$. These contributions cannot be equal if 
$l_{\min} \neq l_{\min}^{\prime}$, since then only one side would give a 
non-empty contribution. In fact, we can deduce much more. Let $l := l_{\min}$. 
It is clear that, for every $t^{*} \in \Z$, we have equality of multisets
\be\label{minl}
\bigcup_{p^{l} \mid \mid a_i} S(A_i,t^{*} b_i) = \bigcup_{p^{l} \mid \mid c_j} 
S(C_j,t^{*} d_j).
\ee 
By induction on the first parameter $n$, the total number of prime divisors of 
the $a_i$ and $c_j$, we can deduce that the progressions 
$S(A_i,b_i)$ for which $p^{l} \mid \mid a_i$ and the 
progressions $S(C_j,d_j)$ for which $p^{l} \mid \mid c_j$ are 
equal in pairs. This fact will be exploited later on.  
\par For the next step in the argument, consider any $i$ for which 
$l_i = l_{\min}$. For each $\xi$ such that $S(p^{l_i},b_i) \supseteq S(p^k,\xi)$ 
we can find, as shown earlier, some $j$ such that $S(A_i,b_i) = S(C_j,d_j)$ and 
$S(a_i,b_i) \supseteq S(c_j,d_j)$. Clearly, the multiset union of all these 
$S(c_j,d_j)$ must contain $S(a_i,b_i)$ and thus (\ref{containment}) and 
Lemma 3.4(i) imply 
that some subset of the $S(c_j,d_j)$ are pairwise disjoint and their 
union equals $S(a_i,b_i)$. To summarise, for any $i$ such that $l_i = l_{\min}$,
we can find a set of $j$'s such that
\be\label{intermediate}
S(C_j,d_j) = S(A_i,b_i) \; {\hbox{for each $j$ and}} \; S(a_i,b_i) = 
\bigsqcup_{j} S(c_j,d_j).
\ee
These conditions imply that
\be\label{modulop}
S(p^{l_i},b_i) = \bigsqcup_{j} S(p^{l_{j}^{\prime}},d_j).
\ee
\par If, in (\ref{intermediate}), 
we had $S(a_i,b_i) = S(c_j,d_j)$ for some $j$, then we could
apply the induction on $\mu + \nu$. Hence we may assume that $l_{j}^{\prime} > 
l_i$ for each $j$ in (\ref{modulop}), and therefore Lemma 3.4(ii) 
applies to the $d_j$ in this union. 
\par Now take $t = p$ in (\ref{apunion2}) and assume for the moment that there 
is some 
pair $(i_1,j_1)$ such that $S(a_{i_1}, p b_{i_1}) = S(c_{j_1}, p b_{j_1})$ and 
$p^{l_{\min}} \mid \mid a_{i_1}$. Then from (\ref{apunion2}) 
it would follow that, for every 
$t \in \Z$, we have the equality of multisets 
\be
\bigcup_{i \neq i_1} S(a_i, t (pb_i)) = \bigcup_{j \neq j_1} S(c_j, t(p b_j)).
\ee
Applying the induction hypothesis on $\mu + \nu$, we could then conclude that 
the arithmetic progressions $S(a_i, p b_i), i \neq i_1$ and 
$S(c_j, pd_j), j \neq j_1$ are equal in pairs. But then, by applying 
Lemma 3.4(ii) 
to any union of the type (\ref{intermediate})-(\ref{modulop}), we would find 
that there must, after all, be a 
pair $(i,j)$ such that $S(a_i,b_i) = S(c_j,d_j)$, so that the induction on 
$\mu + \nu$ yields the proposition.
\par Thus, finally, we may assume there is no pair $(i_1,j_1)$ satisfying the 
above requirements. But, from (\ref{minl}) we know that the progressions 
$S(a_i,p^{l} b_i)$ for which $p^{l} \mid \mid a_i$ and the progreesions
$S(c_j,p^{l}d_j)$ for which $p^{l} \mid \mid c_j$, are equal in pairs. So we 
introduce a third and final 
induction parameter, namely the smallest integer $m$ such that there exists at 
least one pair $(i_1,j_1)$ such that $S(a_{i_1},p^m b_{i_1}) = 
S(c_{j_1}, p^m d_{j_1})$ and $p^l \mid \mid a_{i_1}$. We know that $m$ is finite.
But, if $m > 1$, then applying the previous 
argument for $m=1$ to the multiset relation
\be
\bigcup_{i = 1}^{\mu} S(a_i,t(p^{m-1}b_i)) = 
\bigcup_{j=1}^{\nu} S(c_j,t(p^{m-1} d_j)), \;\;\; {\hbox{for all $t \in \Z$}},
\ee
we could conclude that the progressions 
$S(a_i,p^{m-1} b_i)$ and $S(c_j, p^{m-1} d_j)$ are equal in pairs, thus 
contradicting the definition of $m$. 
\par This final contradiction completes the proof of Proposition 3.3.
\end{proof}

We can now complete the proof of Theorem 1.8. 
Let $i_1,...,i_r$ be the indices for which $q_{1,i} \neq 0$. Our 
goal is to find a pair $u,v$ such that 
$\theta_{i_u} + \theta_{i_v} \in \Z$. Claim 3.2 and Proposition 3.3 
already imply that we can pair off the $\theta_{i_j}$ such that the sum of each 
pair is in $\Z$, modulo their
dependence on $\theta_2,...,\theta_d$. Precisely, let $V_1$ be the 
$\Q$-vector subspace of $V$ spanned by $\theta_2,...,\theta_d$. Then Claim 3.2 
and Proposition 3.3 imply that $r$ is even, say $r=2s$, and the indices 
$i_1,...,i_r$ can be reordered so that, for $t = 1,...,s$, 
\be
q_{1,i_{2t-1}} > 0, \;\;\; q_{1,i_{2t}} = - q_{1,i_{2t-1}}, \;\;\; 
q_{0,i_{2t-1}} + q_{0,i_{2t}} \in \Z
\ee
and hence 
\be
\theta_{i_{2t-1}} + \theta_{i_{2t}} = z_t + v_{1,t}, \;\;\; 
{\hbox{for some $z_t \in \Z$ and $v_{1,t} \in V_1$}}.
\ee
Hence we would be done if we could find any $t$ for which $v_{1,t} = 0$. We can
locate such a $t$ by a more refined application of Lemma 2.1. 
Let $\delta$ be a very small positive real 
number - how small is necessary will again become clear in due course. By 
Lemma 2.1, we can find arbitrarily large integers $n$ satisfying 
\be\label{equi2}
n \equiv 0 \; ({\hbox{mod $L_0$}}) \;\; {\hbox{and}} \;\; \delta^{2i-1} < 
\{\theta_i\} < \delta^{2i-1} + e^{-1/\delta}, \; {\hbox{for $i = 1,...,d$}}.
\ee
Let $M_1$ be the maximum of the numbers $q_{1,i_{2t-1}}$, 
$t = 1,...,s$, and let
$\mathcal{T}_{1} := \{t : q_{1,i_{2t-1}} = M_1\}$. Now let
\be
M_{2,+} := \max \{ q_{2,i_{2t-1}} : t \in \mathcal{T}_1 \}, \;\;\;
M_{2,-} := \min \{q_{2,i_{2t}} : t \in \mathcal{T}_1 \}.
\ee
We claim that $M_{2,-} = - M_{2,+}$. Suppose this is not the case, and without 
loss of generality that $M_{2,+} > - M_{2,-}$. Let $\mathcal{T}_{2} := 
\{t \in \mathcal{T}_1 : q_{2,i_{2t-1}} = M_{2,+}\}$. We shall prove a 
contradiction to the assumption that the function $\epsilon(N)$ is constant 
for $N \gg 0$. Fix a very small $\delta > 0$, let $n_2$ be any integer 
satisfying (\ref{equi2}) and take
\be
N_{2,+} := 2n_2 \cdot \lceil \frac{1}{2(\delta M_1 + \delta^3 M_{2,+})} \rceil, 
\;\;\; N_{2,-} := \frac{N_{2,+}}{2}.
\ee
Then the point is that, provided $\delta$ is small enough, 
\be
(\lfloor N_{2,+} \{\theta_i\} \rfloor, \lfloor N_{2,-} \{\theta_i \} \rfloor) 
= \left\{ \begin{array}{lr} (1,0), & {\hbox{if $i = i_t$ for some $t \in 
\mathcal{T}_{2}$}}, \\
{\hbox{either $(0,0)$ or $(-1,-1)$}}, & {\hbox{otherwise}}.
\end{array} \right.
\ee
Hence,
\be
\epsilon(N_{2,-}) - \epsilon(N_{2,+}) = |\mathcal{T}_{2}| \neq 0,
\ee
giving the desired contradiction, since the numbers $N_{2,\pm}$ can be made 
arbitrarily large. 
\par So we have shown that $M_{2,+} = M_{2,-}$. Let $M_2 := M_{2,+}$. With 
$\mathcal{T}_2$ as defined above we have, for each $t \in \mathcal{T}_2$, that  
\be
q_{\xi,i_{2t-1}} = M_{\xi} = - q_{\xi,i_{2t}}, \; {\hbox{for $\xi = 1,2$}},
\ee
which in turn implies that, if $V_2$ is the $\Q$-vector subspace of $V$ 
spanned by $\theta_3,...,\theta_d$, then, for each $t \in \mathcal{T}_2$, 
\be
\theta_{i_{2t-1}} + \theta_{i_{2t}} = z_t + v_{2,t}, \;\;\; 
{\hbox{for some $z_t \in \Z$ and $v_{2,t} \in V_2$}}.
\ee
The idea now is to iterate the same kind of argument to produce a sequence of 
non-empty sets of indices
\be
\mathcal{T}_1 \supseteq \mathcal{T}_2 \supseteq \cdots \supseteq \mathcal{T}_d
\ee
such that, for any $j = 1,...,d$ and any $t \in \mathcal{T}_j$, 
\be\label{reduction}
\theta_{i_{2t-1}} + \theta_{i_{2t}} = z_t + v_{j,t}, \;\;\; 
{\hbox{for some $z_t \in \Z$ and $v_{j,t} \in V_j$}}.
\ee
Since $V_d = \{0\}$ we will be done at the $d$:th and final step of this 
process. 
\par We have already described in detail the first two steps of the process, 
but for the sake of completeness, let us describe just one further step. Let 
\be
M_{3,+} := \max \{ q_{3,i_{2t-1}} : t \in \mathcal{T}_2 \}, \;\;\;
M_{3,-} := \min \{q_{3,i_{2t}} : t \in \mathcal{T}_2 \}.
\ee
We claim that $M_{3,-} = - M_{3,+}$. Suppose this is not the case, and without 
loss of generality that $M_{3,+} > - M_{3,-}$. Let $\mathcal{T}_{3} := 
\{t \in \mathcal{T}_2 : q_{3,i_{2t-1}} = M_{3,+}\}$. We shall prove a 
contradiction to the assumption that the function $\epsilon(N)$ is constant 
for $N \gg 0$. Fix a very small $\delta > 0$, let $n_3$ be any integer 
satisfying (\ref{equi2}) and take
\be
N_{3,+} := 2n_3 \cdot \lceil \frac{1}{2(\delta M_1 + \delta^3 M_2 + 
\delta^5 M_{3,+})} \rceil, \;\;\; N_{3,-} := \frac{N_{3,+}}{2}.
\ee
Then the point is that, provided $\delta$ is small enough, 
\be
(\lfloor N_{3,+} \{\theta_i\} \rfloor, \lfloor N_{3,-} \{\theta_i \} \rfloor) 
= \left\{ \begin{array}{lr} (1,0), & {\hbox{if $i = i_t$ for some $t \in 
\mathcal{T}_{3}$}}, \\
{\hbox{either $(0,0)$ or $(-1,-1)$}}, & {\hbox{otherwise}}.
\end{array} \right.
\ee
Hence,
\be
\epsilon(N_{3,-}) - \epsilon(N_{3,+}) = |\mathcal{T}_{3}| \neq 0,
\ee
giving the desired contradiction, since the numbers $N_{3,\pm}$ can be made 
arbitrarily large. 
\par So we have shown that $M_{3,+} = M_{3,-}$. Letting $M_3 := M_{3,+}$ and 
with $\mathcal{T}_3$ as above, we have shown that
\be
t \in \mathcal{T}_3 \Rightarrow q_{\xi,i_{2t-1}} = M_{\xi} = - q_{\xi,i_{2t}}, 
\;\; {\hbox{for $\xi = 1,2,3,$}}
\ee
from which (\ref{reduction}) immediately follows for $j = 3$. 
\par Hence, as we have already noted, by iterating the argument as far as 
$j = d$ we will find that, for any $t \in \mathcal{T}_d$, 
$\theta_{i_{2t-1}} + \theta_{i_{2t}} \in \Z$. Since the set 
$\mathcal{T}_d$ will certainly be non-empty, the proof of Theorem 1.8 is 
complete.  
\\
\par We close this section by indicating how to prove Theorem 1.9. In the
notation of the statement of that theorem, if all the $\alpha_i$ are 
irrational, then the result follows immediately from Theorem 1.8. So it 
suffices to show that we cannot have an irreducible $m$-EEC in which there
are both rational and irrational moduli present. To accomplish this, it suffices
to show that the irrational moduli must themselves constitute an 
$m^{\prime}$-EEC for some $m^{\prime}$. Let the representation function
$r(N)$ be as in (\ref{repfn}). As before, the requirement is that
$r(N) = m$ for all $N \gg 0$. Let us separate representations of $N$ 
coming from irrational and rational moduli separately and write
\be
r(N) = r_{{\hbox{irr}}}(N) + r_{{\hbox{rat}}}(N).
\ee
Now the point is that, no matter what the rational moduli are, there must be
some $a \in \N$ such that the function $r_{{\hbox{rat}}}(N)$ is constant on
any congruence class modulo $a$. Hence, the same must be true of 
$r_{{\hbox{irr}}}(N)$, for all $N \gg 0$. But now one may
check that this is enough
to be able to push through the entire proof of Theorem 1.8 and
deduce that the irrational moduli can be paired off so that each pair sums to 
an integer. Theorem 1.9 follows at once. 
  
\setcounter{equation}{0}

\section{The inhomogeneous case}

In the previous section, we employed Weyl equidistribution (Lemma 2.1) to 
reduce the characterisation of homogeneous $m$-EEC's with irrational moduli to 
a purely combinatorial problem about multiset unions of arithmetic 
progressions (Proposition 3.3). The first part of this approach carries over 
to the inhomogeneous setting, but the second part seems to be more difficult 
and we do not resolve it to our satisfaction in this paper. Nevertheless, we 
can at least explain why Question 1.11 has a negative answer and why families 
of inhomogeneous $m$-EEC's may have additional structure.
\par We begin with some terminology :
\\
\\
{\bf Definition 4.1.} A {\em system of parameters} 
$\mathcal{S} = (\mu,\bm{a},\bm{b},\bm{\phi})$ shall consist of a positive 
integer $\mu$ and three $\mu$-tuples
\be\label{system}
\bm{a} = (a_1,...,a_{\mu}),\;\; \bm{b} = (b_{1},...,b_{\mu}),\;\; 
\bm{\phi} = (\phi_{1},...,\phi_{\mu}),
\ee
where the $a_i$ are positive integers and the $b_{i}, \phi_i$ any integers. 
We consider all the tuples as unordered, i.e.: we do not distinguish between 
systems based on the same three tuples but with the entries reordered. 
The number $\mu$ is called the {\em size} of the system. 
We say that the aystem is {\em homogeneous} if $\bm{\phi} = 
\bm{0}$, otherwise {\em inhomogeneous}. 
\\
\\
{\bf Definition 4.2.} Let $\mathcal{S} = (\mu,\bm{a},\bm{b},\bm{\phi})$ and 
$\mathcal{S}^{\prime} = (\nu,\bm{c},\bm{d},\bm{\psi})$ be two systems of 
parameters. We say that these two systems are {\em complementary} if, for 
every $t \in \Z$, we have an equality of multisets 
\be
\bigcup_{i=1}^{\mu} S(a_i,\phi_{i} + t b_{i}) = 
\bigcup_{j=1}^{\nu} S(c_j,\psi_{j} + t d_{j}).
\ee

The study of $m$-EEC's of Beatty sequences can be reduced to that of 
complementary systems of parameters. In the case of homogeneous sequences and
systems, this reduction was established in Claim 3.2. The same arguments
carry over to the inhomogeneous setting. Indeed, let notation be as in 
eqs. (3.1)-(3.8) and assume that all $\gamma_i \in \Q$ - the general 
case can also be reduced to this one. Write $\gamma_i = \frac{g_i}{h_i}$, 
a fraction in lowest terms, and set 
\be\label{h}
H := {\hbox{LCM}}\{h_i : i = 1,...,k\}, \;\;\; \gamma_i =: \frac{G_i}{H}.
\ee
Then the analogoue of (\ref{homsys}) in the inhomogeneous setting is
\be\label{systemdef}
\left\{ \begin{array}{lr} a_i := L_0 V_i H, \;\; b_i = - U_i V_i H, \;\;
\phi_i := - L_0 V_i G_i, & {\hbox{if $q_{1,i} > 0$}}, \\
c_j := - L_0 V_j H, \;\; d_j := - U_j V_j H, \;\; \psi_j := - L_0 V_j G_j, 
& {\hbox{if $q_{1,j} < 0$}}. \end{array} \right.
\ee
Using the same methods as in Section 3, one may show that if the function
$\epsilon(N)$ of (\ref{epsilon}) 
is constant for $N \gg 0$, then for all $t \in \Z$ we must have
equality of multisets
\be\label{inhomred}
\bigcup_{q_{1,i} > 0} S(a_i, \phi_i + t b_i) = 
\bigcup_{q_{1,j} < 0} S(c_j, \psi_j + t d_j).
\ee
In fact, it is not hard to see from our earlier analysis that when 
$d = 1$, i.e.: dim$(V) = 2$, then equality in (\ref{inhomred}) for all 
$t \in \Z$ is also sufficient for constancy of $\epsilon(N)$. 
\par 
At this point, there remains a gap in our understanding, since we do not
know what is the $\lq$right' generalisation of Proposition 3.3 to 
inhmogeneous systems of parameters. However, we shall explain why 
Question 1.11 has a negative answer. We need some more termoinology.
\\
\\
{\bf Definition 4.3.} Let $\mathcal{S} = (\mu,\bm{a},\bm{b},\bm{\phi})$ and 
$\mathcal{S}^{\prime} = (\nu,\bm{c},\bm{d},\bm{\psi})$ be two systems of 
parameters. We say that $\mathcal{S}^{\prime}$ is a {\em subsystem} of 
$\mathcal{S}$ if $\nu \leq \mu$ and there is a $\nu$-element subset 
$\{i_1,...,i_{\nu}\}$ of $\{1,...,\mu\}$ such that
\be
\bm{c} = (a_{i_1},...,a_{i_{\nu}}),\;\; 
\bm{d} = (b_{i_1},...,b_{i_{\nu}}),\;\; 
\bm{\psi} = (\phi_{i_1},...,\phi_{i_{\nu}}).
\ee
A {\em decomposition} of $\mathcal{S}$ is a collection 
$\mathcal{S}^{1},...,\mathcal{S}^{k}$ of subsystems of $\mathcal{S}$ based on 
index sets whose disjoint union is $\{1,...,\mu\}$. We write 
\be\label{decomp}
\mathcal{S} = \bigsqcup_{i=1}^{k} \mathcal{S}^{i}.
\ee
The decomposition is said to be {\em trivial} if $k = 1$, otherwise 
{\em non-trivial}. It is {\em complete} if each $\mathcal{S}^{i}$
has size one.
\\
\\
{\bf Definition 4.4.} A system of parameters $\mathcal{S} = (\mu,\bm{a},\bm{b},
\bm{\phi})$ is said to be {\em exact} if, for each $t \in \Z$, the multiset
$\cup_{i=1}^{\mu} S(a_i,\phi_i + t b_i)$ is an exact cover of the underlying set,
in other words, if every integer occurring in the multiset occurs the same
number of times. 
\par A decomposition (\ref{decomp}) of $\mathcal{S}$ is called {\em exact}
if each $\mathcal{S}^{i}$ is exact. Note that any complete decomposition is
exact, but the converse need not be true.  
\\
\\
{\bf Drfinition 4.5.} A pair  
$(\mathcal{S},\mathcal{S}^{\prime})$ of complementary systems is said to be
{\em reducible/exact/} \\ {\em completely reducible} if there are 
non-trivial/exact/complete decompositions
\be
\mathcal{S} = \bigsqcup_{i=1}^{k} \mathcal{S}^{i}, \;\;\;
\mathcal{S}^{\prime} = \bigsqcup_{i=1}^{k} (\mathcal{S}^{\prime})^{i}
\ee
for which the pairs $(\mathcal{S}^{i},(\mathcal{S}^{\prime})^{i})$, $i = 1,...,k$,
are each complementary/exact/equal.
\\
\par Proposition 3.3 states that any complementary pair of 
homogeneous systems of parameters is completely reducible. 
In general, however, a complementary pair need be neither reducible nor
exact - see Example 4.8 below. 
Together with the following fact, this explains why Question 1.11 has a 
negative answer :
\\
\\
{\bf Proposition 4.6.} {\em If an (irreducible) $m$-EEC with irrational moduli 
has the form 
(\ref{grahamstruct}) then, with notation as in Sections 2-4, the
systems of parameters $\mathcal{S} = (\mu,\bm{a},\bm{b},\bm{\phi})$ and
$\mathcal{S}^{\prime} = (\nu,\bm{c},\bm{d},\bm{\psi})$ defined by 
(\ref{systemdef}) form an exact (irreducible) complementary pair. The latter 
condition is also
sufficient when dim$(V) = d+1 = 2$. In fact, the notations in 
(\ref{grahamstruct}) and (\ref{systemdef}) are consistent, up to a normalising
factor and shifts $(\bm{\phi} \mapsto \bm{\phi} + t \bm{b}, \bm{\psi}
\mapsto \bm{\psi} + t \bm{d})$.}
\\
\par
The verification of these assertions is a tedious recapitulation of earlier
work. We shall therefore content ourselves with giving two further
examples. The first illustrates the correspondences in Proposition 4.6, the
second demonstrates the existence of inexact complementary pairs and hence
of $m$-EEC's not of the form (\ref{grahamstruct}). 
\\
\\
{\bf Example 4.7.} Let $\alpha \in (1,\infty) \backslash \Q$. 
Then $\{S(\alpha,0),
S(\frac{\alpha}{\alpha - 1},0)\}$ is an EEC by Beatty's theorem. Two exact 
covers of $\Z$ by APs are given by 
\be\label{apcovers}
\{S(3,0), S(3,1), S(3,2)\} \;\; {\hbox{and}} \;\; \{S(2,0), S(4,1), S(4,3)\}.
\ee
From this data we can build, as in (\ref{grahamstruct}), the following
irreducible, inhomogeneous \\ EEC :
\be
\left\{ S(3\alpha,0), S(3\alpha,\alpha), S(3\alpha,2\alpha) \right\} \;
\cup \;
\left\{ 
S\left(\frac{2\alpha}{\alpha - 1},\frac{\alpha}{\alpha - 1}\right),
S\left(\frac{4\alpha}{\alpha - 1},\frac{\alpha}{\alpha - 1}\right),
S\left(\frac{4\alpha}{\alpha - 1},\frac{3\alpha}{\alpha - 1}\right) 
\right\}.
\ee

In the notation of (\ref{deftheta}), we have $k = 6$ and the following table
of values  

\begin{table}[ht!]
\begin{center}
\begin{tabular}{|c|c|c|} \hline
$i$ & $\theta_i$ & $\gamma_i$ \\ \hline \hline
$1$ & $\frac{1}{3\alpha}$ & $0$ \\ \hline
$2$ & $\frac{1}{3\alpha}$ & $- 1/3$ \\ \hline
$3$ & $\frac{1}{3\alpha}$ & $- 2/3$ \\ \hline
$4$ & $\frac{\alpha - 1}{2\alpha}$ & $- 1/2$ \\ \hline
$5$ & $\frac{\alpha - 1}{4\alpha}$ & $- 1/4$ \\ \hline
$6$ & $\frac{\alpha - 1}{4\alpha}$ & $- 3/4$ \\ \hline
\end{tabular} 
\end{center}
\end{table}
$\;$ \\
Then (\ref{q}) will become 
\be
\theta_1 = \theta_2 = \theta_3, \;\; \theta_4 = -\frac{3}{2} \theta_1 + 
\frac{1}{2}, \;\; \theta_5 = \theta_6 = \frac{1}{2} \theta_1.
\ee
In (\ref{l}) and (\ref{h}) we'll obtain the values
\be
L_0 = 4, \;\;\; L = 3, \;\;\; H = 12,
\ee
and for the remaining variables in (\ref{uv}), (\ref{h}) and (\ref{systemdef})
the table of values 

\begin{table}[ht!]
\begin{center}
\begin{tabular}{|c|c|c|c|c|c|c|c|c|c|} \hline
$i$ & $U_i$ & $V_i$ & $G_i$ & $a_i$ & $b_i$ & $\phi_i$ & $c_i$ & $d_i$ & 
$\psi_i$ \\ \hline \hline
$1$ & $0$ & $3$  & $0$ & $144$ & $0$  & $0$   & $\;$  & $\;$ & $\;$  \\ \hline
$2$ & $0$ & $3$  & $8$ & $144$ & $0$  & $-96$ & $\;$  & $\;$ & $\;$  \\ \hline
$3$ & $0$ & $3$  & $4$ & $144$ & $0$  & $-48$ & $\;$  & $\;$ & $\;$  \\ \hline
$4$ & $2$ & $-2$ & $0$ & $\;$  & $\;$ & $\;$  & $96$  & $48$ & $0$   \\ \hline
$5$ & $1$ & $-4$ & $9$ & $\;$  & $\;$ & $\;$  & $192$ & $48$ & $144$ \\ \hline
$6$ & $1$ & $-4$ & $3$ & $\;$  & $\;$ & $\;$  & $192$ & $48$ & $48$  \\ \hline
\end{tabular} 
\end{center}
\end{table}

Dividing everything by the normalising 
factor of $48$, we see that (\ref{inhomred})
becomes the assertion that, for every $t \in \Z$, 
\be
S(3,0) \cup S(3,-2) \cup S(3,-1) = S(2,t) \cup S(4,3+t) \cup S(4,1+t).
\ee
Notice that this equality is irreducible and that, when $t = 0$, it 
coincides with 
that between the
pair of exact covers we started with in (\ref{apcovers}).  
\\
\\
{\bf Example 4.8.} Let $\mathcal{S} = (\mu,\bm{a},\bm{b},\bm{\phi})$ and
$\mathcal{S}^{\prime} = (\nu,\bm{c},\bm{d},\bm{\psi})$ be systems for which
\be\label{t-remove}
b_i \equiv 0 \; ({\hbox{mod $a_i$}}), \; i = 1,...,\mu, \;\;\;
c_j \equiv 0 \; ({\hbox{mod $d_j$}}), \; j = 1,...,\nu.
\ee
Then both sides of (\ref{inhomred}) are independent of $t$, so it
suffices for complementarity to have the multiset equality
\be\label{t-indep}
\bigcup_{i=1}^{\mu} S(a_i, \phi_i) = \bigcup_{j=1}^{\nu} S(c_j, \psi_j).
\ee

Consider the solution of (\ref{t-indep}) given by 
\be\label{exa}
S(1,0) \cup S(6,0) = S(2,0) \cup S(3,0) \cup S(6,1) \cup S(6,5).
\ee
One readily checks that this equality is irreducible and inexact. Hence
any corresponding complementary pair of systems 
satisfying (\ref{t-remove}) will be
both irreducible and inexact. This is the simplest example we found of an 
inexact complementary pair, in that the value of $\mu + \nu = 6$ is minimal
(note that one must have $\min \{\mu,\nu\} > 1$), and 
likewise with the moduli $a_i,c_j$.
\par We can use this data to construct an irreducible 
$2$-EEC of Beatty sequences
with irrational moduli, which does not have the form (\ref{grahamstruct}). 
In the notation of (\ref{q}), we choose $d = 1$, $k = 6$. Condition
(\ref{t-remove}) will be satisfied if $q_{0,i} \in \Z$ for all $i$. Then it
is easy to check that, with the following assignments, (\ref{systemdef})
reduces (\ref{t-indep}) to (\ref{exa}) :

\newpage

\begin{table}[ht!]
\begin{center}
\begin{tabular}{|c|c|c|} \hline
$i$ & $\theta_i$        & $\gamma_i$ \\ \hline \hline
$1$ & $\theta_1$        & $0$        \\ \hline
$2$ & $z_2 + 6\theta_1$ & $0$        \\ \hline
$3$ & $z_3 - 2\theta_1$ & $0$        \\ \hline
$4$ & $z_4 - 3\theta_1$ & $0$        \\ \hline
$5$ & $z_5 - \theta_1$  & $1/6$      \\ \hline
$6$ & $z_6 - \theta_1$  & $5/6$      \\ \hline
\end{tabular}
\end{center}
\end{table}    
$\;$ \par
Here $\theta_1$ is any positive irrational and the $z_i$ are integers. By
(\ref{sumq}), we have $m = z_2 + \cdots + z_6$. Since each $\theta_i > 0$, 
the minimum possible value of $m$ is thus $m = 4$, obtained by choosing
$z_2 = 0$, $z_3 = z_4 = z_5 = z_6 = 0$ and $\theta_1 < 1/3$. This will
yield an irreducible 
$4$-EEC of Beatty sequences with irrational moduli, which does not 
have the form (\ref{grahamstruct}). However, as promised above, we can 
do better and construct an irreducible 
$2$-EEC instead. The point is that, formally, in 
the proof of Claim 3.2, 
there is no requirement that the $\theta_i$ in (\ref{q}) be positive,
and also nothing changes if we shift any $\theta_i$ by an integer. So, if
we set $\theta := -\theta_1$, we can define a new family of Beatty sequences by 

\begin{table}[ht!]
\begin{center}
\begin{tabular}{|c|c|c|} \hline
$i$ & $\theta^{\prime}_{i}$ & $\gamma_i$ \\ \hline \hline
$1$ & $1 + \theta$        & $0$        \\ \hline
$2$ & $1 + 6\theta$       & $0$        \\ \hline
$3$ & $-2\theta$          & $0$        \\ \hline
$4$ & $-3\theta$          & $0$        \\ \hline
$5$ & $-\theta$           & $1/6$      \\ \hline
$6$ & $-\theta$           & $5/6$      \\ \hline
\end{tabular}
\end{center}
\end{table}   
$\;$ \par
This yields an irreducible 
$2$-EEC provided $-1/6 < \theta < 0$. By the way, consider the
function $\epsilon(N)$ of (\ref{epsilon}). Let $x := \{N\theta\}$. Then 
\be
\epsilon(N) = f(x) = \{x\} + \{6x\} + \{-2x\} + \{-3x\} + \{-x + 1/6\} + 
\{-x + 5/6\}.
\ee
Since $\{N\theta\}$ is equidistributed in $[0,1)$, constancy of $\epsilon(N)$
for $N \gg 0$ is equivalent to constancy of $f(x)$ for $x \in [0,1)$. One 
readily checks that $f(x) = 2$ for all $x \in [0,1)$. 
\\
\par
In general, given an irreducible and inexact solution to (\ref{t-indep}), one 
can construct, as in 
Example 4.8, a corresponding irreducible $m$-EEC not of the
form (\ref{grahamstruct}), where $m = \min \{\mu,\nu\}$. It is
easy to see how (\ref{exa}) can be generalised to give examples of
irreducible and inexact solutions of (\ref{t-indep}), for any 
value of $\min\{\mu,\nu\} > 1$. Hence we deduce 
\\
\\
{\bf Theorem 4.9.} {\em For every $m > 1$, there exist irreducible
$m$-EEC's of Beatty sequences with irrational moduli, not having the form
(\ref{grahamstruct})}.

\setcounter{equation}{0}

\section{A fractional Beatty theorem}
 
The notion of exact 
$m$-cover in Definition 1.2 clearly does not make sense if $m$ is
not an integer. However, one might imagine various ways of extending the
notion to non-integer $m$. Here, we only take a first tentative step, which
nevertheless may prove instructive. We shall prove a $\lq$fractional version'
of Beatty's theorem.
\par Let $p,q$ be relatively prime positive integers. Let $\alpha_1, \alpha_2$ 
be positive irrationals satisfying 
\be\label{beatty}
\frac{1}{\alpha_1} + \frac{1}{\alpha_2} = \frac{p}{q}.
\ee
As in Section 2, denote $\theta_i := 1/\alpha_i, \; i = 1,2$. 
Let $p_0,p_1 \in \{0,1,...,q-1\}$ be the integers defined by 
\be
p \equiv p_0 \; ({\hbox{mod $q$}}), \;\;\;\; \frac{p_1}{q} < \{\theta_1\} < 
\frac{p_1 + 1}{q}.
\ee
Let $r(N)$ be the representation function of (\ref{repfn}) and set
\be
R(N) := \sum_{M=1}^{N} r(M).
\ee
We will prove the following result :
\\
\\
{\bf Theorem 5.1.}
{\em For every $N \in \mathbb{N}$ one has 
\be\label{R}
R(qN-1) = \left\{ \begin{array}{lr} pN - \lceil p/q \rceil, & {\hbox{if 
$p_1 < p_0$}}, \\ pN - \lfloor p/q \rfloor, & {\hbox{if $p_1 \geq p_0$}}.
\end{array} \right.
\ee
Moreover, 
\\
\\
(A) If $q = 1$, then $r(N) = p$ for every $N \in \mathbb{N}$. 
\\
(B) If $q = 2$, then $r(N) \in \{ \lfloor p/2 \rfloor, \lceil p/2 \rceil \}$
for every $N \in \mathbb{N}$. 
\\
(C.i) If $q > 2$ and $p_1 < p_0$, then 
\be\label{three}
r(N) \in \left\{ \lfloor p/q \rfloor, \lceil p/q \rceil, \lceil p/q \rceil + 1
\right\}, \;\;\; {\hbox{for every $N \in \mathbb{N}$}}.
\ee
If, for each $i = 0,1,2$, we let 
\be
S_i := \{ N \in \mathbb{N} : r(N) = \lfloor p/q \rfloor + i \},
\ee
then each $S_i$ has asymptotic density, say $d(S_i) = d_i$, where
\begin{eqnarray}
d_0 = \left( 1 - \frac{p_0}{q} \right) + d_2, \\
d_1 = \frac{p_0}{q} - 2d_2, \\
d_2 = \frac{1}{q} \left[ \frac{p_{0}^{2} + p_{1}^{2} - (p_0 - p_1)}{2q} - 
p_1 \{\theta_1\}\right].
\end{eqnarray}
(C.ii) If $q > 2$ and $p_1 \geq p_0$, then 
\be
r(N) \in \left\{ \lfloor p/q \rfloor - 1, \lfloor p/q \rfloor, 
\lceil p/q \rceil \right\}, \;\;\; {\hbox{for every $N \in \mathbb{N}$}}.
\ee
If, for each $i = 0,1,2$, we let 
\be
T_i := \{ N \in \mathbb{N} : r(N) = \lceil p/q \rceil -i \},
\ee
then each $T_i$ has asymptotic density, say $d(T_i) = \delta_i$, where
\begin{eqnarray}
\delta_0 = \frac{p_0}{q} + \delta_2, \\
\delta_1 = \left( 1 - \frac{p_0}{q} \right) - 2\delta_2, \\
\delta_2 = \frac{1}{q} \left[ \frac{4p_0 p_1 + (p_1 - p_0) - (p_{0}^{2} + 
p_{1}^{2})}{2q} - (2p_1 - p_0) + (q-p_1)\{\theta_1\} \right].
\end{eqnarray}}
  
{\bf Remark 5.2.} The interesting thing in this result is that, when
$q > 2$, the function $r(N)$ cannot take on just the values $\lfloor p/q 
\rfloor$ and $\lceil p/q \rceil$. Nevertheless, $r(N)$ 
never takes on more than three distinct values, and each value is assumed
on a fairly regular set. Thus the family
$\{S(\alpha_1,0), S(\alpha_2,0)\}$ is always, in some sense, $\lq$close to an
exact $p/q$-cover'.  

\begin{proof}
Eqs. (\ref{epsilon}) and (\ref{rep}) here become 
\begin{eqnarray}
\epsilon(N) = \{N\theta_1\} + \{N\theta_2\}, \\
r(N) = \frac{p}{q} + (\epsilon(N) - \epsilon(N+1)). 
\end{eqnarray}
Define $c_N \in \{0,1,...,q-1\}$ by $c_N \equiv Np \; ({\hbox{mod $q$}})$. 
Since (\ref{beatty}) implies that
\be
N\theta_1 + N\theta_2 \equiv \frac{c_N}{q} \; ({\hbox{mod $1$}}),
\ee
it follows that 
\be\label{epsn}
\epsilon (N) = \left\{ \begin{array}{lr} c_N / q, & {\hbox{if $\{N\theta_1\} < 
c_N / q$}}, \\ 1 + c_N / q, & {\hbox{if $\{N\theta_1\} > c_N / q$}}. 
\end{array} \right.
\ee
In particular, 
\be\label{epsone}
\epsilon(1) = \left\{ \begin{array}{lr} p_0 / q, & {\hbox{if $p_1 < p_0$}}, \\
1 + p_0 / q, & {\hbox{if $p_1 \geq p_0$}}, \end{array} \right.
\ee
whereas
\be\label{qdivn}
{\hbox{if $q | N$, then $c_N = 0$ and $\epsilon (N) = 1$}}.
\ee 
From (5.16) it follows that, for any $N_1 > N_2$,
\be
R(N_1) - R(N_2) = \left( \frac{p}{q} \right)(N_1 - N_2) - (\epsilon(N_1 + 1)
- \epsilon (N_2 + 1)).
\ee
In particular, if $N \equiv -1 \; ({\hbox{mod $q$}})$, then (\ref{qdivn}) 
implies that
\be\label{Rdiff}
R(N+q) - R(N) = pN.
\ee
Furthermore,  
\be\label{Rqminusone}
R(q-1) = \frac{p(q-1)}{q} + (\epsilon (1) - 1) = p - \lfloor p/q \rfloor - 
p_0 / q + (\epsilon (1) - 1).
\ee
From (\ref{epsone}), (\ref{Rdiff}) and (\ref{Rqminusone}), 
one easily deduces (\ref{R}). 
Now we turn to the proofs of
statements (A), (B) and (C). The first of these 
is just Theorem 1.7, and it
is immediately implied by (5.16) and (\ref{qdivn}). 
Using (\ref{epsn}) we also quickly deduce (B).
For (C) we need to work a little more. We shall prove the 
statements of (C.i) rigorously - similar arguments give (C.ii). 
It is already clear from (5.16)
that $r(N)$ must be one of the four numbers $\lfloor p/q \rfloor + i$, $i \in
\{-1,0,1,2\}$. If $r(N) = \lfloor p/q \rfloor - 1$ 
then it means that $\epsilon(N+1) - \epsilon(N) = 1 + \frac{p_0}{q}$. By
(\ref{epsn}), this happens if and only if 
\be
\{N\theta_1\} < \frac{c_N}{q}, \;\;\; \{(N+1)\theta_1\} > \frac{c_{N+1}}{q}, 
\;\;\; c_{N+1} = c_N + p_0 < q.
\ee
In particular, these conditions are unsatisfiable if $\{\theta_1\} < 
\frac{p_0}{q}$, in other words if $p_1 < p_0$. 
This proves (\ref{three}). The set $S_2$ consists of
all those $N \in \mathbb{N}$ for which $\epsilon(N+1) - \epsilon(N) = 
\frac{p_0}{q} - 2$. By (\ref{epsn}), we have explicitly,
\be\label{stwo}
S_2 = \left\{ N \in \mathbb{N} : \{N\theta_1\} > \frac{c_N}{q}, \;\; 
\{(N+1)\theta_1\} < \frac{c_{N+1}}{q}, \;\; 
c_{N+1} = c_N + p_0 - q \geq 0 \right\}.
\ee
That this set has
an asymptotic density follows from Lemma 2.1, which we can also use to compute 
$d_2$ explicitly. Note that (5.7) and (5.8) would follow from (5.9) and 
the fact that
\be
d_0 + d_1 + d_2 = 1.
\ee
Hence, it just remains to compute $d_2$. From (\ref{stwo}) we deduce that
$N \in S_2$ if and only if 
\be
q-p_0 \leq c_N \leq q-1
\ee
and
\be
\max \left\{ \frac{c_N}{q}, 1 - \{\theta_1\} \right\} < \{N\theta_1\} < 
\frac{c_N + p_0}{q} - \{\theta_1\}.
\ee
By Lemma 2.1, we thus have 
\be
d_2 = \frac{1}{q} \left[ \sum_{j = q-p_0}^{q-p_1 - 1} \left( 
\frac{j+p_0-q}{q} \right) + \sum_{j=q-p_1}^{q-1} \left( \frac{p_0}{q} - 
\{\theta_1\} \right) \right].
\ee
It is now just a tedious exercise to verify (5.9). 
\end{proof}   
      
\setcounter{equation}{0}

\section{Open questions}

In this paper we showed how the classification of $m$-EEC's of Beatty 
sequences with irrational moduli can be reduced to that of
complementary pairs of systems of parameters, the latter problem being 
purely arithmetical. We proved that every homogeneous complementary pair is
completely reducible, but that there exist inhomogeneous complementary pairs
which are neither reducible nor exact. There one might let things rest, 
but we feel that something is still missing, that it should be
possible to prove some more insightful structural result for arbitrary
complementary pairs. This is admittedly a vague hypothesis. Equally vague, 
but still enticing, is the question of how 
to push further the notion of $m$-cover, when $m$ is
not an integer. Theorem 5.1 may provide some hints, but let us stop 
before we cross over the threshold into the
realm of idle speculation !
 
\section*{Acknowledgements}

I thank Aviezri Fraenkel and Urban Larsson for stimulating discussions. 
My research is partly supported by a grant from the Swedish Science
Research Council (Vetenskapsr\aa det).

\vspace*{1cm}


\begin{thebibliography}{NOORS} 

\bibitem[E]{E} P. Erd\H{o}s, \emph{On a problem concerning systems of
congruences (Hungarian; English summary)}, Mat. Lapok. \textbf{3} 
(1952), 122--128.

\bibitem[F69]{F69} A.S. Fraenkel, \emph{The bracket function and complementary
sets of integers}, Canad. J. Math. \textbf{21} (1969), 6--27.

\bibitem[F73]{F73} A.S. Fraenkel, \emph{Complementing and exactly covering 
sequences}, J. Combinatorial Theory Ser. A \textbf{14} (1973), 8--20.

\bibitem[F09]{F09} A.S. Fraenkel, \emph{The rat game and the mouse game}.
To appear in Games of No Chance (4). 

\bibitem[Gr63]{Gr63} R.L. Graham, \emph{On a theorem of Uspensky}, Amer. Math.
Monthly \textbf{70} (1963), 407--409.

\bibitem[Gr73]{Gr73} R.L. Graham, \emph{Covering the positive integers by 
disjoint sets of the form $\{\lfloor n\alpha + \beta \rfloor : n = 1,2,...\}$},
J. Combinatorial Theory Ser. A \textbf{15} (1973), 354--358.

\bibitem[GrOB]{GrOB} R.L. Graham and K. O'Bryant, \emph{A discrete Fourier
kernel and Fraenkel's tiling conjecture}, Acta. Arith. \textbf{118} (2005), 
no. 3, 283--304.

\bibitem[Gu]{Gu} R.K. Guy, \emph{Unsolved Problems in Number Theory (3rd 
edition)}. Springer, New York (2004). 

\bibitem[KN]{KN} L. Kuipers and H. Niederreiter, \emph{Uniform Distribution of
Sequences}. Wiley (1974). 

\bibitem[L]{L} U. Larsson, \emph{Restrictions of $m$-Wythoff Nim and 
$p$-complementary Beatty sequences}. To appear in Games of No Chance (4). 

\bibitem[OB]{OB} K. O'Bryant, \emph{A generating function technique for Beatty 
sequences and other step sequences}, J. Number Theory \textbf{94} (2002), 
299--319.

\bibitem[S]{S} Th. Skolem, \emph{\"{U}ber einige Eigenschaften der 
Zahlenmengen $\lfloor n\alpha + \beta \rfloor$ bei irrationalem $\alpha$ mit
einleitenden Bemerkungen \"{u}ber einige kombinatorische probleme}, 
Norske Vid. Selsk. Forh. (Trondheim) \textbf{30} (1957), 118--125.

\bibitem[U]{U} J.V. Uspensky, \emph{On a problem arising out of the theory of 
a certain game}, Amer.  Math. Monthly \textbf{34} (1927), no. 10, 516--521.

\bibitem[Z]{Z} M.-Z. Zhang, \emph{Irreducible systems of residue classes that
cover every integer exactly $m$ times (Chinese; English summary)}, 
Sichuan Daxue Xuebao \textbf{28} (1991), 403--408.
 
\end{thebibliography}
\end{document}